\newtheorem{theorem}{Theorem}[section]
\newtheorem*{thmA}{Theorem}
\newtheorem{lemma}[theorem]{Lemma}
\theoremstyle{definition}
\newcommand{\cu}{\mathcal{K}}
\newcommand{\ca}{{\mathcal{K}}_0}
\theoremstyle{remark}
\numberwithin{equation}{section}
\begin{document}

\title[Volume preserving centro-affine normal flows]
 {Volume preserving centro-affine normal flows}

\author[M. N. Ivaki]{Mohammad N. Ivaki}
\address{Department of Mathematics and Statistics,
  Concordia University, Montreal, QC, Canada, H3G 1M8}
\email{mivaki@mathstat.concordia.ca}

\author[A. Stancu]{Alina Stancu}
\address{Department of Mathematics and Statistics,
  Concordia University, Montreal, QC, Canada, H3G 1M8}
\email{stancu@mathstat.concordia.ca}
\subjclass[2010]{Primary 53C44, 52A05; Secondary 35K55}

\dedicatory{}

\begin{abstract}
We study the long time behavior of the volume preserving $p$-flow in $\mathbb{R}^{n+1}$ for $1\leq p<\frac{n+1}{n-1}$. By extending Andrews' technique for the flow along the affine normal, we prove that every centrally symmetric solution to the volume preserving $p$-flow converges sequentially to the unit ball in the $C^{\infty}$ topology, modulo the group of special linear transformations.
\end{abstract}

\maketitle

\section{Introduction}
Let $K$ be a compact, centrally symmetric, strictly convex body, smoothly embedded in $\mathbb{R}^{n+1}$.
We denote the space of such convex bodies by $\mathcal{K}_{sym}$. Let
 $$x_K:\mathbb{S}^n\to\mathbb{R}^{n+1}$$ be
 the Gauss parametrization of $\partial K$, the boundary of $K\in \mathcal{K}_{sym}$, where the origin of the Euclidean space is chosen to coincide with the center of
 symmetry of the body. The support function of $\partial K$ is defined by
 $$s_{\partial K}(z):= \langle x_K(z), z \rangle,$$
 for each $z\in\mathbb{S}^n$.  We denote the matrix of the radii of curvature of $\partial K$ by
$\mathfrak{r}=[\mathfrak{r}_{ij}]_{1\leq i,j\leq n}$, the entries of $\mathfrak{r}$  considered as functions on the unit sphere too. They are related to the support function by the identity
 $$\mathfrak{r}_{ij}:=\bar{\nabla}_i\bar{\nabla}_j s+s\bar{g}_{ij},$$
 where $\bar{g}_{ij}$ is the standard metric on $\mathbb{S}^{n}$ and $\bar{\nabla}$ is the standard Levi-Civita connection of $\mathbb{S}^{n}.$
 We denote the Gauss curvature of $\partial K$ by $\mathcal{K}$ and remark that, as a function on the unit sphere, it is related to the support function of the convex body by $$\frac{1}{\mathcal{K}}:=S_n=\det_{\bar{g}}(\bar{\nabla}_i\bar{\nabla}_js+\bar{g}_{ij}s):=\frac{\det \mathfrak{r}_{ij}}{\det{\bar{g}_{ij}}}.$$
 We denote the eigenvalues of $[\mathfrak{r}_{ij}]_{1\leq i,j\leq n}$ with respect to the metric $\bar{g}_{ij}$ by $\lambda_i$ for $1\leq i\leq n.$ Then the principal curvatures of $\partial K$ are $\kappa_i=\frac{1}{\lambda_i}$ for $1\leq i\leq n$.

For any fixed convex body $K_0$ whose interior contains the origin, and whose boundary is of class $C^2$ with strictly positive Gauss curvature, the following flow along the centro-affine normal has been defined in \cite{S}:
 $$\frac{\partial}{\partial t}x:=-\ca^{\alpha} \mathcal{N}_0,~~
 x(\cdot,0)=x_{K_0}(\cdot),
$$
where $\alpha$ is a positive real power which will be made explicit soon, $\ca=\cu/s^{n+2}$ is, interestingly, an $SL(n+1)$ invariant called centro-affine curvature and, finally, $ \mathcal{N}_0= {\ca}^{-\frac{1}{n+1}}(z)(- {\cu}^{\frac{1}{n+1}}(z)\, z-\bar{\nabla} ({\cu}^{\frac{1}{n+1}}(z)))$ is the centro-affine normal, both as functions of $z$. The flow was defined for the purpose of finding new global centro-affine invariants of smooth convex bodies in which a certain class of existing invariants arose naturally. Only the short time existence to the flow was then needed. Moreover, several interesting isoperimetric type inequalities  were obtained via short time existence of the flow, \cite{S}.
 In what follows, we will consider an equivalent (Euclidean) formulation of the flow and, in this paper, we will restrict the power to a certain range.

Let $p \geq 1$ be a fixed real number and let $K_0\in \mathcal{K}_{sym}$. We consider a family of convex bodies $\{K_t\}_t\in \mathcal{K}_{sym}$, and their  associated smooth
 embeddings $x:\mathbb{S}^n\times[0,T)\to \mathbb{R}^{n+1}$, which are evolving according  to the $p$-centro-affine flow, namely,
 \begin{equation}\label{e: p flow ev of x}
 \frac{\partial}{\partial t}x:=-s\left(\frac{\mathcal{K}}{s^{n+2}}\right)^{\frac{p}{n+1+p}}\, z,~~
 x(\cdot,0)=x_{K_0}(\cdot),~~ x(\cdot ,t)=x_{K_t}(\cdot).
 \end{equation}
  It is this flow for which we propose the study of asymptotic behavior by applying the techniques of \cite{BA1}.
  The long time behavior of the flow in $\mathbb{R}^2$ was studied by the first author in \cite{Ivaki} using tools of affine differential geometry. It was proved there that the volume preserving $p$-flow with $p \geq 1$ evolves any convex body in $ \mathcal{K}_{sym}$ to the unit disk in Hausdorff distance, modulo $SL(2).$ A further application of the techniques developed in \cite{Ivaki} to the $L_{-2}$ Minkowski problem is given in \cite{Ivaki2}. The case $p=1$, the well-known affine normal flow, was already addressed by Andrews \cite{andrews}, \cite{BA1}. Andrews,
 investigated the affine normal flow of compact, convex hypersurfaces in any dimension and showed that the volume preserving flow evolves any convex initial bounded open set exponentially fast, in the $C^{\infty}$ topology, to an ellipsoid. In another direction, interesting results for the affine normal flow have been obtained in \cite{LT} by Loftin and Tsui regarding ancient solutions, and existence and regularity of solutions on non-compact strictly convex hypersurfaces.

In this paper, we prove that
\begin{thmA}[Main Theorem]
Let $1\leq p<\frac{n+1}{n-1}$ be a real number. Let $x_{K_0}:\mathbb{S}^n\to\mathbb{R}^{n+1}$ be a smooth, strictly convex embedding of $K_0\in\mathcal{K}_{sym}.$ Then there exists a unique solution $x:\mathbb{S}^n\times [0,T)\to\mathbb{R}^{n+1}$ of equation (\ref{e: p flow ev of x}) with initial data $x_{K_0}$. The rescaled convex bodies given by the embeddings $\left(\frac{V(B^{n+1})}{V(K_t)}\right)^{\frac{1}{n+1}}x_{K_t}$ converge sequentially in the $C^{\infty}$ topology to the unit ball, modulo $SL(n+1).$
\end{thmA}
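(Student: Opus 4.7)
The plan is to adapt Andrews' blueprint for the affine normal flow to the $p$-flow, exploiting the $SL(n+1)$-invariance of both the speed and the centro-affine curvature $\ca=\cu/s^{n+2}$ to renormalize the evolving body at each time. First, I would rewrite \eqref{e: p flow ev of x} as an evolution of the support function, $\partial_t s = -s(\cu/s^{n+2})^{p/(n+1+p)}$, which, using $\cu^{-1} = S_n$, is a fully nonlinear parabolic equation whose principal part is concave in $[\mathfrak{r}_{ij}]$. Short-time existence, uniqueness, and preservation of central symmetry follow from standard theory, and the volume-preserving version is obtained by an explicit ODE rescaling that adds a term proportional to $s\,z$ to the speed.

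The key move, following Andrews, is to act at each time by a suitable $\phi_t \in SL(n+1)$ — for instance chosen so that the John ellipsoid of $\phi_t(K_t)$ is the unit ball — to secure uniform bounds $c_1 \leq s \leq c_2$ on $\mathbb{S}^n$. Since both the flow speed and $\ca$ are $SL(n+1)$-invariant, the normalized family again satisfies an equivalent $p$-flow, and every pointwise invariant estimate transfers back.

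The heart of the argument, and the main obstacle, is a two-sided pinching of the centro-affine curvature $\ca$. Computing $\partial_t \ca$, or more robustly applying the maximum principle to an auxiliary quantity built from $\ca$ together with $s$, produces a parabolic equation whose zeroth order terms carry a sign governed precisely by $p$. The threshold $p < \frac{n+1}{n-1}$ is exactly the range in which these reaction terms can be absorbed and the maximum principle yields uniform upper and lower bounds on $\ca$. Combined with the bounds on $s$, this controls $\cu$ from above and below, after which a tensor maximum principle applied to the evolution of $[\mathfrak{r}_{ij}]$ bounds the eigenvalues $\lambda_i$ away from $0$ and $\infty$, rendering the equation for $s$ uniformly parabolic and concave.

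Once uniform parabolicity and concavity are established, Krylov--Safonov and Evans--Krylov yield $C^{2,\beta}$ estimates and a standard bootstrap delivers uniform $C^{\infty}$ bounds on the normalized solution for all time; long-time existence follows. Arzel\`a--Ascoli then extracts a $C^{\infty}$ subsequential limit of the normalized bodies. To identify it, I would invoke an $SL(n+1)$-invariant functional that is monotone under the volume-preserving flow — for example an entropy whose dissipation vanishes precisely at centered ellipsoids, in the spirit of \cite{BA1} — together with the characterization of its critical points within $\mathcal{K}_{sym}$ at fixed volume as the centered ellipsoids. This forces every subsequential limit to be an ellipsoid, hence the unit ball after the $SL(n+1)$ normalization, completing the proof.
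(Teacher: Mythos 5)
Your overall architecture --- renormalize by $SL(n+1)$ via John's lemma, establish two-sided curvature bounds, bootstrap with Krylov--Safonov/Evans--Krylov to $C^{\infty}$, extract a subsequential limit, and identify it via a monotone affine-invariant functional --- is the same as the paper's. The genuine gap is in what you call the heart of the argument, the ``two-sided pinching'' of the centro-affine curvature. The \emph{upper} bound on $\mathcal{K}$ does follow from a Tso-type maximum principle applied to $\Psi=s^{\alpha}S_n^{\beta}/(s-R_-/2)$, and this works for all $p\geq 1$. The \emph{lower} bound is the hard direction, and your proposal contains no mechanism for it: the standard route (Harnack estimate plus a displacement bound from ball barriers) fails here because for $p>1$ the flow is not invariant under Euclidean translations, so off-center balls are not admissible barriers, and the paper explicitly introduces a new device to get around this. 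Namely, one passes to the polar bodies $K_t^{\circ}$, which evolve by the expanding dual flow $\partial_t s^{\circ}=s^{\circ}\bigl(\mathcal{K}^{\circ}/s^{\circ\, n+2}\bigr)^{-p/(n+1+p)}$; Tso's technique applied at a \emph{minimum} of $\Phi=s^{\circ\alpha}S_n^{\circ\beta}/(2R_+^{\circ}-s^{\circ})$ bounds $\mathcal{K}^{\circ}$ from above, and the pointwise duality identity $\bigl(\mathcal{K}/s^{n+2}\bigr)(x)\,\bigl(\mathcal{K}^{\circ}/s^{\circ\, n+2}\bigr)(x^{\circ})=1$ converts this into the lower bound on $\mathcal{K}$. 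Moreover, the restriction $1\le p<\frac{n+1}{n-1}$ does not arise from ``absorbing reaction terms'' as you suggest; it is needed so that the exponent $\frac{n+1+p}{np}$ in the resulting ODE $\partial_t(1/\Phi)\leq -C'(1/\Phi)^{\frac{n+1+p}{np}}+C$ exceeds $1$, which is what makes the comparison argument close. Without the duality idea your pinching step is an assertion, not a proof.

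Two secondary points. The unspecified monotone functional is Lutwak's $p$-affine isoperimetric ratio $\Omega_p^{n+1+p}(K_t)/V^{n+1-p}(K_t)$, whose monotonicity along the flow and whose equality case (centered ellipsoids) identify the limit; an unspecified ``entropy'' is not enough, since one needs the rigidity of the equality case. Also note that the curvature bounds obtained by these maximum-principle arguments degenerate as $t\to 0^{+}$ (they carry factors $t^{-\gamma}$), so they cannot be used globally in time; the paper applies them on a fixed rescaled interval $[\delta/2,\delta]$ after each John-lemma renormalization, which is how the uniform bounds on $[T/2,T)$ are actually assembled. Finally, bounding $\mathcal{K}=\prod_i\kappa_i$ from both sides does not by itself bound the individual principal curvatures when $n>1$; the separate tensor maximum principle for $[\mathfrak{r}_{ij}]$ that you mention is indeed required there.
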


The restriction on the range of $p$ is purely technical and it will become evident in Lemma \ref{lem: lower G} which is the only place requiring it.

\section{Uniform lower and upper bound on the principal curvatures}
We will start this section by proving that, under uniform lower and upper bounds on the support function of the evolving convex body, we have uniform lower and upper bounds on the Gauss curvature which depend only on the dimension $n$, the value of $p$, the bounds on the support function, and time. To obtain the upper bound on the Gauss curvature, we apply a standard technique of Tso \cite{Tso}. To derive the lower bound bound on the Gauss curvature, we consider  the evolution of the dual convex body and we apply Tso's technique to the speed of the dual $p$-flow. This procedure avoids the need for a Harnack estimate, or displacement bounds.

It is easy to see from $\langle x(z), z \rangle = s(z)$
 that as the convex bodies $\{K_t\}_t$ evolve by (\ref{e: p flow ev of x}) their support functions satisfy the partial differential equation
 \begin{equation}\label{e: p flow ev of s}
\partial_t s=-s\left(\frac{\mathcal{K}}{s^{n+2}}\right)^{\frac{p}{n+1+p}},
\end{equation} see also \cite{S}. The short time existence and uniqueness of solutions for a smooth and strictly convex initial hypersurface follow from the strict parabolicity of the equation and it was shown in \cite{S}.
We will use this latter evolution equation to describe the flow throughout the rest of the paper.

The proofs of the two lemmas pertaining to upper, respectively, lower bound of the Gauss curvature of the evolving convex bodies have similar outline yet with some differences. For completeness, we will present both proofs.

 Given a convex body $K$, the inner radius of $K$, $r_-(K)$, is the radius of the largest ball contained in $K$; the outer radius of $K$, $r_+(K)$, is the radius of the smallest ball containing $K$. Notice that for any centrally symmetric convex body, the smallest and largest ball as above will be centered at the origin.

\begin{lemma}[Upper bound on the Gauss curvature]\label{lem: upper G}
For any smooth, strictly convex solution $\{K_t\}_{[0,t_0]}$ of the evolution equation (\ref{e: p flow ev of s}) with $0<R_{-}\leq r_{-}(K_t)\leq r_{+}(K_t)\leq R_{+} < +\infty$ for $t\in[0,t_0]$, and some positive numbers $R_{\pm}$, we have
$$\mathcal{K}^{\frac{p}{n+p+1}}\leq \left(C+C't^{-\frac{np}{(n+1)(p+1)}}\right),$$
where $C$ and $C'$ are constants depending on $n,p,R_{-}$ and $R_{+}.$
\end{lemma}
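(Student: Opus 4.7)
The plan is to apply Tso's maximum principle technique \cite{Tso}, adapted to the present flow. Writing $\Phi := s(\mathcal{K}/s^{n+2})^{p/(n+1+p)} = s^a\mathcal{K}^q$ with $q = p/(n+1+p)$ and $a = (n+1)(1-p)/(n+1+p)$, the flow reads $\partial_t s = -\Phi$, so since $s \in [R_-,R_+]$, bounding $\mathcal{K}^{p/(n+p+1)}$ from above is equivalent to bounding $\Phi$ from above. Fix $c := R_-/2$ and consider the auxiliary function
$$Q(z,t) := \frac{\Phi(z,t)}{s(z,t)-c},$$
which is smooth and strictly positive on $\mathbb{S}^n \times [0,t_0]$.

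The evolution of the radii-of-curvature matrix is $\partial_t\mathfrak{r}_{ij} = -\bar{\nabla}_i\bar{\nabla}_j\Phi - \Phi\bar{g}_{ij}$. Letting $\mathfrak{r}^{ij}$ denote the matrix inverse of $\mathfrak{r}_{ij}$ and $H := \mathfrak{r}^{ij}\bar{g}_{ij}$ the sum of principal curvatures, this yields $\partial_t\log S_n = -\mathfrak{r}^{ij}\bar{\nabla}_i\bar{\nabla}_j\Phi - \Phi H$, hence
$$\partial_t\Phi = q\Phi\,\mathfrak{r}^{ij}\bar{\nabla}_i\bar{\nabla}_j\Phi + q\Phi^2 H - \frac{a\Phi^2}{s}.$$
At a spatial maximum of $Q$, the conditions $\bar{\nabla}Q = 0$ and $\bar{\nabla}^2 Q \leq 0$ combine (using $\bar{\nabla}_i\bar{\nabla}_j s = \mathfrak{r}_{ij} - s\bar{g}_{ij}$) to give $\mathfrak{r}^{ij}\bar{\nabla}_i\bar{\nabla}_j\Phi \leq \Phi(n-sH)/(s-c)$. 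Substituting this into $\partial_t Q = \partial_t\Phi/(s-c) + \Phi^2/(s-c)^2$ eliminates the second-order term, and collapsing the two $H$-contributions into a single multiple of $(n-cH)$ yields
$$\partial_t Q \leq Q^2\left[qn + 1 - \frac{a(s-c)}{s} - qcH\right].$$

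The AM--GM inequality $H \geq n\mathcal{K}^{1/n}$ converts the last term into a genuinely super-linear negative contribution; using $\mathcal{K}^q = \Phi/s^a$ and the bounds on $s$, one has $\mathcal{K}^{1/n} \geq c_3 Q^{1/(nq)}$ for some $c_3 = c_3(n,p,R_\pm) > 0$, so this term is bounded above by $-c_2 Q^{2+1/(nq)}$. The algebraic identity $np + n + 1 + p = (n+1)(p+1)$ delivers the clean exponent $\sigma := 1 + 1/(nq) = (n+1)(p+1)/(np)$, producing the ODI $(Q_{\max})'(t) \leq c_1 Q_{\max}(t) - c_2 Q_{\max}(t)^{1+\sigma}$. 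Standard ODE comparison integrates this to $Q_{\max}(t) \leq C + C't^{-1/\sigma} = C + C't^{-np/((n+1)(p+1))}$, and since $\mathcal{K}^{p/(n+p+1)} = \Phi/s^a = (s-c)Q/s^a$ with $s \in [R_-,R_+]$, the claimed bound follows. The main obstacle is the careful bookkeeping in deriving $\partial_t\Phi$ and then reducing $\partial_t Q$ at a maximum point; once the two $H$-terms are merged and AM--GM is invoked, the time exponent $np/((n+1)(p+1))$ emerges essentially automatically from the structure of $\Phi = s^a\mathcal{K}^q$.
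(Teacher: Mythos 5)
Your proof is correct and follows essentially the same route as the paper: Tso's trick applied to the speed $\Phi$ divided by $s-R_-/2$ (your $Q$ is exactly the paper's $\Psi$), the same maximum-principle reduction at the spatial maximum, the same AM--GM bound $\mathcal{H}\geq n\mathcal{K}^{1/n}$, and the same ODE integration yielding the exponent $-np/((n+1)(p+1))$. The only blemish is the typo $c_1 Q_{\max}$ in your final differential inequality, which should read $c_1 Q_{\max}^2$; this does not affect the comparison argument or the conclusion.
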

\begin{proof} Let $\alpha:=1-\frac{(n+2)p}{n+1+p}$ and $\beta:=-\frac{p}{n+1+p}.$  Consider the  function
$$\Psi=\frac{s^{\alpha}S_n^{\beta}}{s-R_{-}/2},$$ where $S_n$ stands for the $n$-th symmetric polynomial in the radii of curvature as a function on the sphere $\mathbb{S}^n$.
Using the maximum principle, we will show that $\Psi$ is bounded from above by a function of $n, p, R_{-}, R_{+},$ and time.
At the point where the maximum of $\Psi$ occurs, we have
$$0=\bar{\nabla}_i\Psi=\bar{\nabla}_i \left(\frac{s^{\alpha}S_n^{\beta}}{s-R_{-}/2}\right)\ \ \ {\hbox{and}}\ \ \  \bar{\nabla}_i\bar{\nabla}_j \Psi\leq 0.$$
Hence, we obtain
$$\frac{\bar{\nabla}_i (s^\alpha S_n^\beta)}{s-R_-/2}=\frac{(s^\alpha S_n^\beta) \bar{\nabla}_i s}{(s-R_-/2)^2}$$ and, consequently,
\begin{equation}\label{e: tso}
\bar{\nabla}_i\bar{\nabla}_j\left(s^{\alpha}S_n^{\beta}\right)+\bar{g}_{ij}\left(s^{\alpha}S_n^{\beta}\right)\leq
\frac{s^{\alpha}S_n^{\beta}\mathfrak{r}_{ij}-R_{-}/2s^{\alpha}S_n^{\beta}\bar{g}_{ij}}{s-R_{-}/2}.
\end{equation}
We calculate
\begin{align*}
\partial_t\Psi&=-\frac{\beta s^{\alpha}S_n^{\beta-1}}{s-R_{-}/2}(\dot{S}_n)_{ij}
\left[\bar{\nabla}_i\bar{\nabla}_j\left(s^{\alpha}S_n^{\beta}\right)+\bar{g}_{ij}\left(s^{\alpha}S_n^{\beta}\right)\right]\\
&+\frac{S_n^{\beta}}{s-R_{-}/2}\partial_t s^{\alpha}+\frac{s^{2\alpha}S_n^{2\beta}}{(s-R_{-}/2)^2},
\end{align*}
 where $\displaystyle (\dot{S}_n)_{ij} := \frac{\partial S_n}{\partial  \mathfrak{r}_{ij}}$ is the derivative of the $S_n$ with respect to the entry $ \mathfrak{r}_{ij}$ of the radii of curvature matrix. By Theorem 1, page 102 of \cite{Mi},
applied to the top symmetric polynomial, we have that $\displaystyle
(\dot{S}_n)_{ij}$ is a
positive definite bilinear form as long as $ \partial K$ has
positive Gauss curvature at all points. Notice that
\begin{equation}\label{e: one to last}
\frac{S_n^{\beta}}{s-R_{-}/2}\partial_t s^{\alpha}=-\alpha\Psi^2+\frac{\alpha R_{-}}{2}\frac{s^{2\alpha-1}S_n^{2\beta}}{(s-R_-/2)^2}\leq -\alpha\Psi^2.
\end{equation}
Thus, using inequalities (\ref{e: tso}) and (\ref{e: one to last}) we infer that, at the point where the maximum of $\Psi$ is reached, we have
\begin{equation}\label{e: last step tso}
\partial_t\Psi\leq\Psi^2\left(-n\beta-\alpha+1+\frac{\beta R_{-}}{2}\mathcal{H}\right).
\end{equation}
We can control the mean curvature $\mathcal{H}$ from below by a positive power of $\Psi.$ First notice that $\mathcal{H}\geq \frac{n}{S_n^{\frac{1}{n}}}.$ Therefore
\begin{equation*}
\mathcal{H}\geq n\left(\frac{s-R_{-}/2}{s^{\alpha}S_{n}^{\beta}}\right)^{\frac{1}{n\beta}}\left(\frac{s^{\alpha}}{s-R_{-}/2}\right)^{\frac{1}{n\beta}}
\geq n\Psi^{-\frac{1}{n\beta}} \left(\frac{R_{-}^{\alpha}}{R_{-}-R_{-}/2}\right)^{\frac{1}{n\beta}}.
\end{equation*}
Therefore, we can rewrite the inequality (\ref{e: last step tso}) as follows
\begin{align*}
\partial_t\Psi&\leq \Psi^2\left(-n\beta-\alpha+1+\frac{n\beta R_{-}}{2}\Psi^{-\frac{1}{n\beta}} \left(\frac{R_{-}^{\alpha}}{R_{-}/2}\right)^{\frac{1}{n\beta}}\right)\\
&=-\Psi^2\left(C'(n,p,R_{-}, R_+)\Psi^{\frac{n+1+p}{np}}-C(n,p)\right),
\end{align*} for positive constants $C(n,p)$ and $C'(n,p,C, R_{-}, R_+)$.
Hence,
$$\Psi\leq \max\left\{C(n,p,R_{-},R_{+}),C'(n,p,R_{-},R_+)t^{-\frac{np}{(n+1)(p+1)}}\right\}$$
for new constants $C$ and $C'$. The corresponding claim for $\mathcal{K}$ follows.
\end{proof}

 Pertaining to the flow by powers of the Gauss curvature, a powerful technique to obtain a uniform lower bound on the Gauss curvatures is using a Harnack's inequality and a lower displacement bound \cite{BA1}. The lower displacement bound controls how much the support of the evolving body decreases depending on time. The displacement bound is obtained by looking at how appropriate barriers, usually balls, with appropriate centers, move along the flow, combined with a containment principle. Despite the fact that we can prove a Harnack's estimate for the $p$-flow, one still needs to obtain the lower displacement bound via barriers. Arbitrarily centered balls are not good barriers in this case because the $p$-flow is not invariant under Euclidean translations and the $p$-flow acting on a ball depends on the choice of the origin. To overcome this difficulty we introduce a new technique. We look at the geometric flow that evolves the dual convex body, the dual $p$-flow.

Let $K^{\circ}$ denote the polar
body associated with $K$ with respect to the origin
$$
K^{\circ} = \{ y \in \mathbb{R}^{n+1} \mid x \cdot y \leq 1,\ \forall x
\in K \}.$$
We will use further the following lemma  proved in \cite{S}.
\begin{lemma}[The dual $p$-flow]\label{lem: dual p flow}\cite{S}
Let $\{K_t\}_{[0,T)}$ be a smooth, strictly convex solution of the evolution equation (\ref{e: p flow ev of s}). Then $\{K_t^{\circ}\}_{[0,T)}$ is a solution of the following evolution equation, the expanding $p$-flow (alternatively called the dual $p$-flow):
$$\partial_t s^{\circ}=s^{\circ}\left(\frac{\mathcal{K}^{\circ}}{s^{\circ n+2}}\right)^{-\frac{p}{n+1+p}}.$$
\end{lemma}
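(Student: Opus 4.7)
The plan is to compute the evolution of the support function of the polar body $s^{\circ}$ directly from that of $s$, exploiting the classical correspondence between the Gauss parametrization of $\partial K$ and the radial parametrization of $\partial K^{\circ}$. This bypasses having to reparametrize the Gauss map of $K^{\circ}$ explicitly in time.

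First I would set up the standard polar duality dictionary. For $z \in \mathbb{S}^n$, the point $\bar{x}(z) := z/s(z)$ lies on $\partial K^{\circ}$; its outer unit normal there is $u(z) := x_K(z)/|x_K(z)|$, and $s^{\circ}(u(z)) = 1/|x_K(z)|$. These all follow from $\rho_{K^{\circ}}(z) = 1/s(z)$ together with $\langle z, x_K(z)\rangle = s(z)$. Because both $K$ and $K^{\circ}$ are strictly convex with positive Gauss curvature, this gives a genuine reparametrization of $\partial K^{\circ}$ by $z \in \mathbb{S}^n$.

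Next I would differentiate $\bar{x}(z)$ in time with $z$ held fixed. Using (\ref{e: p flow ev of s}),
$$\partial_t \bar{x}(z) \;=\; -\frac{z}{s^2}\,\partial_t s \;=\; \frac{z}{s}\left(\frac{\mathcal{K}}{s^{n+2}}\right)^{\frac{p}{n+1+p}}.$$
The rate of change of the support function of a deforming convex body in any direction equals the normal component of the boundary velocity at the corresponding point. Taking the inner product with $u(z)$ and using $\langle z, x_K(z)\rangle = s$ gives
$$\partial_t s^{\circ}(u) \;=\; \langle \partial_t \bar{x}(z), u(z)\rangle \;=\; \frac{1}{|x_K(z)|}\left(\frac{\mathcal{K}}{s^{n+2}}\right)^{\frac{p}{n+1+p}} \;=\; s^{\circ}(u)\left(\frac{\mathcal{K}}{s^{n+2}}\right)^{\frac{p}{n+1+p}}.$$

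Finally, I would apply the $SL(n+1)$-invariant duality for the centro-affine curvature at dual pairs,
$$\frac{\mathcal{K}(z)}{s(z)^{n+2}} \;=\; \left(\frac{\mathcal{K}^{\circ}(u)}{s^{\circ}(u)^{n+2}}\right)^{-1},$$
to convert the coefficient into dual quantities, yielding precisely the claimed expanding $p$-flow. The main point to justify carefully is this last identity, as the rest of the argument is a direct application of the chain rule and the given $s$-equation. The reciprocity can be established by differentiating $\bar{x}(z) = z/s(z)$ twice on $\mathbb{S}^n$, reading off the radii-of-curvature matrix of $\partial K^{\circ}$ at $\bar{x}(z)$ in terms of $s$, $\bar{\nabla} s$, and $\mathfrak{r}_{ij}$, and checking algebraically that the product of centro-affine curvatures at dual points is one; alternatively it can be cited from the classical affine differential geometry literature. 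Once that identity is in hand, the lemma follows.
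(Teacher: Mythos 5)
Your argument is correct. The paper itself gives no proof of this lemma --- it is quoted from \cite{S} --- so there is nothing internal to compare against, but your route is the natural one and it closes. The polar dictionary you set up ($\bar{x}(z)=z/s(z)\in\partial K^{\circ}$, outer normal $u=x_K(z)/|x_K(z)|$, $s^{\circ}(u)=1/|x_K(z)|$) is standard and correctly stated; the step $\partial_t s^{\circ}(u)=\langle \partial_t\bar{x}(z),u(z)\rangle$ is legitimate because the tangential part of the boundary velocity (here, the drift coming from the fact that the direction $u(z,t)$ itself moves) does not contribute to the derivative of the support function; and the coefficient then comes out as $s^{\circ}(u)\left(\mathcal{K}/s^{n+2}\right)^{p/(n+1+p)}$ after using $\langle z,x_K(z)\rangle=s$. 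The only ingredient you defer --- the reciprocity $\left(\mathcal{K}/s^{n+2}\right)(x)\left(\mathcal{K}^{\circ}/s^{\circ\,n+2}\right)(x^{\circ})=1$ at dual points with $\langle x,x^{\circ}\rangle=1$ --- is exactly the identity the paper itself invokes (with references to \cite{H} and \cite{LR}) in the proof of Lemma \ref{lem: lower G}, so citing it rather than rederiving it is consistent with the paper's own standard of rigor. Note also that your pairing is the consistent one: $x=x_K(z)$ and $x^{\circ}=\bar{x}(z)$ do satisfy $\langle x,x^{\circ}\rangle=1$, so the identity is applied at the correct pair of points.
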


It is in next lemma that we need to restrict to the case $p<\frac{n+1}{n-1}.$

\begin{lemma}[Lower bound on the Gauss curvature]\label{lem: lower G}
Let $1\leq p<\frac{n+1}{n-1}.$ Assume that $\{K_t\}_{[0,t_0]}$ is a smooth, strictly convex solution of equation (\ref{e: p flow ev of s}) with $0<R_{-}\leq r_{-}(K_t)\leq r_{+}(K_t)\leq R_{+}< + \infty$ for $t\in[0,t_0]$. Then
$$\mathcal{K}^{\frac{p}{n+p+1}}\geq \frac{1}{C+C't^{\frac{np}{(n-1)p-(n+1)}}},$$
where $C$ and $C'$ are constants depending on $n,~p,~R_{-}$ and $R_{+}.$
\end{lemma}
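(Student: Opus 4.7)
The plan is to mirror Lemma~\ref{lem: upper G} but apply Tso's test-function method to the \emph{dual} flow from Lemma~\ref{lem: dual p flow}. Since the dual $p$-flow is expanding with speed $F^{\circ} = (s^\circ)^{\alpha'}(S_n^\circ)^{\beta'}$, where $\alpha' = 1+(n+2)\gamma$ and $\beta' = \gamma := p/(n+1+p) > 0$, and since --- via the centro-affine polar identity $\ca(K)\cdot\ca(K^\circ)=1$ at polar-corresponding normals --- a lower bound on $\cu$ is equivalent to an upper bound on $\cu^\circ$ (modulo the uniform $s$-bounds, which transfer to $1/R_+\le s^\circ\le 1/R_-$), I look for a test function whose maximum gives a \emph{lower} bound on $F^\circ$.

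Concretely, I take the ``inverted'' Tso quantity
\[
\Psi^\circ := \frac{c-s^\circ}{F^\circ},\qquad c:=2/R_-,
\]
which is strictly positive, and apply the maximum principle to bound it from above. At a spatial maximum the first-order condition cancels the cross terms in $\bar{\nabla}^2\Psi^\circ$, and the second-order condition together with the evolution equation for $F^\circ$ (obtained by differentiating $(s^\circ)^{\alpha'}(S_n^\circ)^{\beta'}$ along $\partial_t s^\circ=F^\circ$) yields, after contracting against the positive-definite tensor $(\dot{S}_n^\circ)^{ij}$ (and using $(\dot{S}_n^\circ)^{ij}\bar{g}_{ij}=S_n^\circ\mathcal{H}^\circ$ and $(\dot{S}_n^\circ)^{ij}\mathfrak{r}^\circ_{ij}=nS_n^\circ$), an inequality of the form
\[
\partial_t\Psi^\circ \;\le\; C_0(n,p,R_\pm)\;-\;\beta' c\,\mathcal{H}^\circ
\]
at that point. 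This parallels (\ref{e: last step tso}); the sign change caused by $F^\circ$ now appearing in the denominator on an expanding flow is what swaps the direction of the eventual bound.

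Next, I use AM--GM, $\mathcal{H}^\circ\ge n/(S_n^\circ)^{1/n}$, together with $\Psi^\circ F^\circ=c-s^\circ$ (which, combined with $F^\circ=(s^\circ)^{\alpha'}(S_n^\circ)^{\beta'}$ and the bounds on $s^\circ$, solves $S_n^\circ$ in terms of $\Psi^\circ$) to obtain $\mathcal{H}^\circ\ge C_1\,\Psi^{\circ\,1/(n\beta')}=C_1\,\Psi^{\circ\,(n+1+p)/(np)}$. The estimate on $\partial_t\Psi^\circ$ then becomes the scalar ODE
\[
\partial_t\Psi^\circ \;\le\; C_0 \;-\; C_2\,\Psi^{\circ\,(n+1+p)/(np)},
\]
and standard comparison against its steady state produces $\Psi^\circ \le C+C'\,t^{np/((n-1)p-(n+1))}$.

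The restriction $p<(n+1)/(n-1)$ enters precisely at this ODE-integration step: it is equivalent to the exponent $(n+1+p)/(np)-1=(n+1-(n-1)p)/(np)$ being strictly positive, without which the comparison ODE does not produce a finite $t$-dependent upper bound. Once $\Psi^\circ$ is controlled, $F^\circ\gtrsim(c-s^\circ)/\Psi^\circ$ yields a lower bound on $F^\circ$, hence on $S_n^\circ$, hence an upper bound on $\cu^\circ$; the centro-affine polar identity then converts this to the claimed lower bound on $\cu^\gamma$. The hardest part is getting the sign structure right: one must choose the inverted test function (with the speed in the denominator) so that the $\mathcal{H}^\circ$ contribution enters with a negative coefficient and the resulting ODE for $\Psi^\circ$ closes.
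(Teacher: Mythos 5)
Your proposal is correct and follows essentially the same route as the paper: Tso's technique applied to the expanding dual flow with the barrier $\Phi=\frac{(s^\circ)^{\alpha}(S_n^\circ)^{\beta}}{2R_+^\circ-s^\circ}$ (your $\Psi^\circ$ is exactly $1/\Phi$, and the paper itself passes to $1/\Phi$ for the ODE comparison), followed by the polar identity $\ca(x)\,\ca^{\circ}(x^\circ)=1$ to transfer the bound back to $\mathcal{K}$. You also correctly locate where $p<\frac{n+1}{n-1}$ is used, namely in making the exponent $\frac{n+1+p}{np}$ exceed $1$ so the comparison ODE yields a bound independent of the initial data.
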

\begin{proof} Recall from Lemma \ref{lem: dual p flow} that $\partial_t s^{\circ}=s^{\circ}\left(\frac{\mathcal{K}^{\circ}}{s^{\circ n+2}}\right)^{-\frac{p}{n+1+p}}.$ We define $\alpha:=1+\frac{(n+2)p}{n+1+p}$ and
$\beta:=\frac{p}{n+1+p}.$ Therefore the dual flow takes the following form $\partial_t s^{\circ}=s^{\circ\alpha}S_n^{\circ\beta}.$ Since $R_{-}\leq r_{-}(K_t)\leq r_{+}(K_t)\leq R_{+}$, we have
$$\frac{1}{R_{+}}\leq r_{-}(K^{\circ}_t)\leq r_{+}(K_t^{\circ})\leq \frac{1}{R_{-}}.$$
Define
$$R_{-}^{\circ}:=\frac{1}{R_{+}},~~R_{+}^{\circ}:=\frac{1}{R_{-}}$$
and consider the function
$$\Phi=\frac{s^{\circ\alpha}S_n^{\circ\beta}}{2R_{+}^{\circ}-s^{\circ}}.$$
The subsequent computation is carried out at the point where the minimum of $\Phi$ occurs:
$$0=\bar{\nabla}_i\Phi=\bar{\nabla}_i \left(\frac{s^{\circ\alpha}S_n^{\circ\beta}}{2R_{+}^{\circ}-s^{\circ}}\right)\ \ \ {\hbox{and}}\ \ \  \bar{\nabla}_i\bar{\nabla}_j \Phi\geq 0,$$
hence we obtain
$$ \frac{\bar{\nabla}_i (s^{\circ\alpha}S_n^{\circ\beta})}{2R_{+}^{\circ}-s^{\circ}}=-\frac{ s^{\circ\alpha}S_n^{\circ\beta} \bar{\nabla}_i s^\circ}{(2R_{+}^{\circ}-s^{\circ})^2} $$ and
\begin{equation}\label{e: tso dual}
\bar{\nabla}_i\bar{\nabla}_j\left(s^{\circ\alpha}S_n^{\circ\beta}\right)+\bar{g}_{ij}\left(s^{\circ\alpha}S_n^{\circ\beta}\right)\geq
\frac{-s^{\circ\alpha}S_n^{\circ\beta}\mathfrak{r}_{ij}^{\circ}+2R_{+}^{\circ}s^{\circ\alpha}S_n^{\circ\beta}\bar{g}_{ij}}{2R_{+}^{\circ}-s^{\circ}}.
\end{equation}
Calculating
\begin{align*}
\partial_t\Phi=&\frac{\beta s^{\circ\alpha}S_n^{\circ\beta-1}}{2R_{+}^{\circ}-s^{\circ}}(\dot{S}_n^{\circ})_{ij}
\left[\bar{\nabla}_i\bar{\nabla}_j\left(s^{\circ\alpha}S_n^{\circ\beta}\right)+\bar{g}_{ij}\left(s^{\circ\alpha}S_n^{\circ\beta}\right)\right]
+\frac{S_n^{\circ\beta}}{2R_{+}^{\circ}-s^{\circ}}\partial_t s^{\circ\alpha}\\
&+\frac{s^{\circ2\alpha}S_n^{\circ2\beta}}{(2R_{+}^{\circ}-s^{\circ})^2},
\end{align*}
and applying inequality (\ref{e: tso dual}), we conclude that
\begin{equation}\label{e: last step tso dual}
\partial_t\Phi\geq \Phi^2\left(1-n\beta-\alpha+2\beta R_{+}^{\circ}\mathcal{H}^{\circ}\right).
\end{equation}
We now estimate the mean curvature $\mathcal{H}^{\circ}$ from below by a negative power of $\Phi.$ As in the proof of the previous lemma, we have
\begin{equation*}
\mathcal{H}^{\circ}\geq n\left(\frac{2R_{+}^{\circ}-s^{\circ}}{s^{\circ\alpha}S_{n}^{\circ\beta}}\right)^{\frac{1}{n\beta}}\left(\frac{s^{\circ\alpha}}{2R_{+}^{\circ}-s^{\circ}}\right)^{\frac{1}{n\beta}}
\geq n\Phi^{-\frac{1}{n\beta}} \left(\frac{R_{-}^{\circ\alpha}}{2R_{+}^{\circ}-R_{-}^{\circ}}\right)^{\frac{1}{n\beta}}.
\end{equation*}
Consequently, inequality (\ref{e: last step tso dual}) can be rewritten as follows
\begin{align*}
\partial_t\Phi&\geq \Phi^2\left(1-n\beta-\alpha+2R^{\circ}_{+}n\beta\Phi^{-\frac{1}{n\beta}} \left(\frac{R_{-}^{\circ\alpha}}{2R_{+}^{\circ}-R_{-}^{\circ}}\right)^{\frac{1}{n\beta}}\right)\\
&=\Phi^2\left(-C(n,p)+C'(n,p,R_{-}^{\circ},R_{+}^{\circ})\Phi^{-\frac{n+1+p}{np}}\right),
\end{align*} for positive constants $C(n,p)$ and $C'(n,p,R_{-}^{\circ},R_{+}^{\circ})$. \newline
Hence
$$\partial_t\left(\frac{1}{\Phi}\right)\leq -C'(n,p,R_{-}^{\circ},R_{+}^{\circ})\left(\frac{1}{\Phi}\right)^{\frac{n+1+p}{np}}+C(n,p),$$
which implies
$$\frac{1}{\Phi}\leq \max\left\{C,C't^{\frac{np}{(n-1)p-(n+1)}}\right\}$$
for new constants $C$ and $C'$. Equivalently, we have a bound for $\Phi$ from below.

Therefore, we have bounded from above $\mathcal{K}^{\circ}$ in terms of $n,p,R_{-}, R_{+}$ and time.  To complete the proof we recall the following fact: for every $x \in \partial K$, there exists an $x^\circ \in \partial K^\circ$ such that
$$\left(\frac{\mathcal{K}}{s^{n+2}}\right)(x)\left(\frac{\mathcal{K}^{\circ}}{s^{\circ n+2}}\right)(x^{\circ})=1,$$
where $x$ and $x^{\circ}$ are related by $\langle x, x^{\circ}\rangle=1,$ with $\langle \, , \rangle$ the inner product in $\mathbb{R}^{n+1}$. A proof of this identity in the smooth setting is simple. A proof in a more general non-smooth setting can be found in \cite{H}, see also \cite{LR}.
By the above identity, we conclude that $\mathcal{K}$ is bounded from below by constants depending on $n,p,R_{-}, R_{+}$ and time.
\end{proof}
We point out that in concluding the long time existence of solutions, and asymptotic behavior of the flow in $\mathbb{R}^2$ the following two lemmas in this section are not necessary.

\begin{lemma}[Lower bound on the principal curvatures]\label{lem: lower P}
Assume that $n>1.$ Let $\{K_t\}_{[0,t_0]}$ be a smooth strictly convex solution  of equation (\ref{e: p flow ev of s}) with $0<R_{-}\leq r_{-}(K_t)\leq r_{+}(K_t)\leq R_{+}< +\infty$ and suppose that$$C_1\leq S_n\leq C_2$$
for all $t\in[0,t_0].$
Then there exist $C$ and $C'$ depending on $n,p,R_{-}$, $R_{+},C_1$ and $C_2$ such that
$$\frac{1}{\kappa_i}\leq \left(C+C't^{-(n-1)}\right),$$ for all $t\in[0,t_0].$
\end{lemma}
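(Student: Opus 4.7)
The inequality $C_1\le S_n=\prod_i\lambda_i\le C_2$ together with $\lambda_i>0$ means that controlling the largest radius of curvature $\lambda_{\max}=\max_i\lambda_i$ from above automatically controls every $\lambda_i$ from above (and forces every $\lambda_j$ to be bounded below by $C_1/\lambda_{\max}^{\,n-1}$). Accordingly, I plan to bound the trace $W:=\bar g^{ij}\mathfrak{r}_{ij}=\sum_i\lambda_i$, which satisfies $\lambda_{\max}\le W\le n\lambda_{\max}$.

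The method is the Tso-style maximum-principle argument used in Lemma~\ref{lem: upper G}, now applied to a test function built from $W$. Differentiating the identity $\mathfrak{r}_{ij}=\bar\nabla_i\bar\nabla_j s+\bar g_{ij}s$ in $t$ and substituting (\ref{e: p flow ev of s}) gives
$$\partial_t\mathfrak{r}_{ij}\;=\;-\bar\nabla_i\bar\nabla_j\bigl(s^{\alpha}S_n^{\beta}\bigr)\;-\;\bar g_{ij}\,s^{\alpha}S_n^{\beta},$$
where $\alpha=1-(n+2)p/(n+1+p)$ and $\beta=-p/(n+1+p)$; tracing with $\bar g^{ij}$ gives a parabolic equation for $W$. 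I would then run the maximum principle on
$$\Xi(z,t)\;:=\;\frac{W(z,t)}{2R_{+}-s(z,t)},$$
which is positive because $s\le R_+$. At a spatial maximum of $\Xi$, the first-derivative condition expresses $\bar\nabla_iW$ in terms of $\bar\nabla_is$, and feeding the second-derivative inequality $\bar\nabla_i\bar\nabla_j\Xi\le 0$ together with the uniform bounds on $s$ and $S_n$ into $\partial_t\Xi$ yields, after diagonalizing $\mathfrak{r}$ in an orthonormal frame, an ordinary differential inequality of the Bernoulli form
$$\frac{d}{dt}\Xi_{\max}(t)\;\le\;C\,\Xi_{\max}(t)\;-\;C'\,\Xi_{\max}(t)^{1+\tfrac{1}{n-1}}.$$
Integration produces $\Xi_{\max}(t)\le C+C't^{-(n-1)}$, and the bound on $W$, hence on each $\lambda_i=1/\kappa_i$, follows from $W\le(2R_+)\,\Xi_{\max}$.

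The delicate point is producing the coercive exponent $1+1/(n-1)$, and this is where the hypothesis $n>1$ enters. The linearization of $s^\alpha S_n^\beta$ with respect to $\mathfrak{r}$ equals $\beta s^\alpha S_n^\beta\,\mathfrak{r}^{-1}$, so the negative contribution to $\partial_tW$ coming from the principal part of $-\bar\Delta(s^\alpha S_n^\beta)$ is governed by $\operatorname{tr}(\mathfrak{r}^{-1})=\sum_i\lambda_i^{-1}$. When $\lambda_{\max}$ is large, the constraint $\prod_i\lambda_i\in[C_1,C_2]$ forces $\prod_{j\ne\max}\lambda_j^{-1}=\lambda_{\max}/S_n\ge\lambda_{\max}/C_2$, whence by AM--GM on these $n-1$ reciprocals $\sum_{j\ne\max}\lambda_j^{-1}\ge(n-1)(\lambda_{\max}/C_2)^{1/(n-1)}\gtrsim W^{1/(n-1)}$; this is precisely the factor that multiplies through to give the $\Xi_{\max}^{1+1/(n-1)}$ term, and the AM--GM step is vacuous for $n=1$. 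A subsidiary issue is the commutator $[\bar\nabla_i\bar\nabla_j,\bar\Delta]$ on $\mathbb{S}^n$, but by Simons' identity on the round sphere this produces only curvature terms linear in $\mathfrak{r}$, which are absorbed into the constant $C$ without affecting the leading exponent.
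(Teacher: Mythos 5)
Your overall strategy is the paper's: a maximum-principle bound on the largest radius of curvature, with the coercive exponent $1+\tfrac{1}{n-1}$ extracted from $\operatorname{tr}(\mathfrak{r}^{-1})=\sum_i\lambda_i^{-1}$ via AM--GM under the pinching $C_1\le S_n\le C_2$, followed by integration of a Bernoulli-type inequality to get $C+C't^{-(n-1)}$. (The paper applies the maximum principle to $(\mathfrak{r}_{ij})_{\max}$ rather than to the trace, and uses no Tso quotient here --- since $\partial_t s<0$, the extra term produced by the factor $(2R_+-s)^{-1}$ is merely harmless --- but these are cosmetic differences.)

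There is, however, a genuine error in where you locate the coercive term, and as written your plan would discard it. The second-order principal part, $\beta s^{-\alpha}S_n^{-(1+\beta)}(\dot S_n)_{kl}\bar\nabla_k\bar\nabla_l(\cdot)$ in the paper's notation for Lemma \ref{lem: lower P}, contributes only ``$\le 0$'' at a spatial maximum and yields no quantitative decay. The term that carries the factor $\operatorname{tr}(\mathfrak{r}^{-1})$, namely $-\beta s^{-\alpha}S_n^{-(1+\beta)}\bigl((\dot S_n)_{kl}\bar g_{kl}\bigr)\mathfrak{r}_{ij}=-\beta s^{-\alpha}S_n^{-\beta}\operatorname{tr}(\mathfrak{r}^{-1})\,\mathfrak{r}_{ij}$, is precisely one of the zeroth-order terms generated by the exchange identity $\bar\nabla_i\bar\nabla_j\mathfrak{r}_{kl}=\bar\nabla_k\bar\nabla_l\mathfrak{r}_{ij}+\bar g_{ij}\mathfrak{r}_{kl}-\bar g_{kl}\mathfrak{r}_{ij}$ on $\mathbb{S}^n$ --- that is, it is one of the very ``commutator terms linear in $\mathfrak{r}$'' that you propose to absorb into the constant $C$. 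If you absorb them all, you are left with $\tfrac{d}{dt}\Xi_{\max}\le C\,\Xi_{\max}$ and no bound of the stated form; you must retain this commutator term and only then apply your AM--GM step $(\dot S_n)_{kl}\bar g_{kl}=S_{n-1}\ge (n-1)C_2^{-1/(n-1)}S_n\,\lambda_{\max}^{1/(n-1)}$. A second omission: differentiating $S_n^{\beta}$ twice in space produces the quadratic gradient term $(\ddot S_n)_{kl;mn}\bar\nabla_i\mathfrak{r}_{kl}\bar\nabla_j\mathfrak{r}_{mn}$, which has no favorable sign on its own and is not controlled by the first-derivative condition at the maximum of $\Xi$; the paper disposes of it by pairing it with the $-\beta(\beta+1)s^{-\alpha}S_n^{-(2+\beta)}\bar\nabla_iS_n\bar\nabla_jS_n$ term through the concavity of $S_n^{1/n}$ (inequality (\ref{ie: second derv})), and then handles the remaining cross terms $\bar\nabla s^{-\alpha}\,\bar\nabla S_n$ by Young's inequality against the same good gradient term. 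Your sketch needs both of these ingredients to close.
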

\begin{proof}We first compute the evolution equation of $\mathfrak{r}_{ij}=\bar{\nabla}_i\bar{\nabla}_js+s\bar{g}_{ij}$ for which a similar computation has been carried in \cite{BA1}.

Set $\alpha:=-1+\frac{(n+2)p}{n+1+p}$ and $\beta:=\frac{p}{n+1+p}.$
\begin{align*}
\partial_t\mathfrak{r}_{ij}&=\beta s^{-\alpha}S_n^{-(1+\beta)}(\dot{S}_n)_{kl}\bar{\nabla}_k\bar{\nabla}_l\mathfrak{r}_{ij}-\beta(\beta+1)s^{-\alpha}
S_n^{-(2+\beta)}\bar{\nabla}_iS_n\bar{\nabla}_jS_n\\
&+\beta s^{-\alpha}S_n^{-(1+\beta)}(\ddot{S}_n)_{kl;mn}\bar{\nabla}_i\mathfrak{r}_{kl}\bar{\nabla}_j\mathfrak{r}_{mn}\\
&+(n\beta-1)s^{-\alpha}S_n^{-\beta}\bar{g}_{ij}-\beta s^{-\alpha}S_n^{-(1+\beta)}(\dot{S}_n)_{kl}\bar{g}_{kl}\mathfrak{r}_{ij}\\
&-S_n^{-\beta}\bar{\nabla}_i\bar{\nabla}_j s^{-\alpha}+\beta S_n^{-(1+\beta)}\bar{\nabla}_is^{-\alpha}\bar{\nabla}_j S_n+\beta S_n^{-(1+\beta)}\bar{\nabla}_js^{-\alpha}\bar{\nabla}_i S_n.
\end{align*}
\begin{description}
  \item[a] \emph{Estimating the terms on the first line:} The first term on the first line is an essential good term viewed as an elliptic operator which is non-positive at the point and direction where the maximum of $\mathfrak{r}_{ij}$ occurs. The second term is an essential good negative term.
  \item[b] \emph{Estimating the term on the second line:} Concavity of $S_{n}^{\frac{1}{n}}$, see again \cite{Mi}, gives
  \begin{equation}\label{ie: second derv}
  \left[(\ddot{S}_n)_{kl;mn}-\frac{n-1}{n{S}_n}(\dot{S}_n)_{kl}(\dot{S}_n)_{mn}\right]
      \bar{\nabla}_{i}\mathfrak{r}_{kl}\bar{\nabla}_j\mathfrak{r}_{mn}\leq0.
  \end{equation}
  \item[c] \emph{Estimating the terms on the last line:}
 \begin{align*}
 \bar{\nabla}_i\bar{\nabla}_js^{-\alpha}&=-\alpha \frac{\bar{\nabla}_i\bar{\nabla}_js}{s^{\alpha+1}}+\alpha(\alpha+1)
  \frac{ \bar{\nabla}_is\bar{\nabla}_js}{s^{\alpha+2}}\\
  &=-\alpha \frac{(\mathfrak{r}_{ij}-\bar{g}_{ij}s)}{s^{\alpha+1}}+\alpha(\alpha+1)
  \frac{ \bar{\nabla}_is \bar{\nabla}_js}{s^{\alpha+2}}.
  \end{align*}
  This gives
  \begin{equation}\label{e: line four}
  -S_n^{-\beta}\bar{\nabla}_i\bar{\nabla}_js^{-\alpha}\leq  CS_n^{-\beta}\mathfrak{r}_{ij}+CS_n^{-\beta}\bar{g}_{ij},
  \end{equation}
  where we used boundedness of $\bar{\nabla}_i s$ from above and the assumptions of the lemma. Notice that $|x|^2=s^2+|\bar{\nabla}s|^2$. Therefore, as $s$ is bounded, $|\bar{\nabla}s|$ must also be bounded. Here we used $| \, \cdot \, |$ for the Euclidean norm in $\mathbb{R}^{n+1}.$
  The other term on the last line can be estimated by Young's inequality:
  \begin{equation}\label{e: line four, the other}
  |\bar{\nabla}_is^{-\alpha}\bar{\nabla}_jS_n|\leq C\varepsilon |\bar{\nabla}_j S_n|^2+C\varepsilon^{-1}.
  \end{equation}
\end{description}
Combining inequality (\ref{ie: second derv}) and inequalities (\ref{e: line four}) and (\ref{e: line four, the other}), for $\varepsilon$ small enough, we have
$$\partial_t\mathfrak{r}_{ij}\leq \beta s^{-\alpha}S_n^{-(1+\beta)}(\dot{S}_n)_{kl}\bar{\nabla}_k\bar{\nabla}_l\mathfrak{r}_{ij}+CS_n^{-\beta}(\mathfrak{r}_{ij}+\bar{g}_{ij})-\beta S_n^{-(1+\beta)}(\dot{S}_n)_{kl}\bar{g}_{kl}\mathfrak{r}_{ij}.$$
Therefore, the maximum of the hypersurface's $\mathfrak{r}_{ij}$, as a function of time, satisfies
$$\partial_t(\mathfrak{r}_{ij})_{\max}\leq CS_n^{-\beta}\left(\bar{g}_{ij}+(\mathfrak{r}_{ij})_{\max}-C'S_n^{-\frac{1}{n-1}}(\mathfrak{r}_{ij})_{\max}^{\frac{n}{n-1}}\right).$$
This implies that, for $(\mathfrak{r}_{ij})_{\max}$ very large, the quantity in parentheses is negative, while $S_n$ is bounded away from zero, hence the behavior of  $(\mathfrak{r}_{ij})_{\max}$ when large is modeled by the differential inequality $$\frac{d (\mathfrak{r}_{ij})_{\max}}{dt} \leq -C (\mathfrak{r}_{ij})_{\max}^{\frac{n}{n-1}}.$$ We thus conclude that
$$\max_{i,j}(\mathfrak{r}_{ij})_{\max}\leq (C+C't^{-(n-1)}),$$
for some positive constants $C,C'$.
As, for any real symmetric matrix $A$, its highest eigenvalue is $\displaystyle \lambda_{max}(A)= \sup_{u \in \mathbb{R}^n,\ \|u \|=1}|\langle u, Au \rangle |$, we obtain the upper bound on the highest radius of curvature of the form $C+C't^{-(n-1)}$.
\end{proof}
\begin{lemma}[Lower and upper bounds on the principal curvatures]\label{lem: lower and upper P}
Assume that $n>1.$ Let $\{K_t\}_{[0,t_0]}$ be a smooth, strictly convex solution of equation (\ref{e: p flow ev of s}) with $0<R_{-}\leq r_{-}(K_t)\leq r_{+}(K_t)\leq R_{+}< + \infty$ and
$$C_1\leq S_n\leq C_2$$
for all $t\in[0,t_0].$
Then there exist constants $C_3$ and $C_4$ depending on $n,p,R_{-},$ $R_{+},C_1$ and $C_2$, such that $\forall t\in[0,t_0]$
$$\frac{1}{C_3(1+t^{-(n-1)})}\leq\kappa_i\leq C_4\left(1+t^{-(n-1)}\right)^{n-1}.$$
\end{lemma}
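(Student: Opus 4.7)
The plan is to derive both bounds from the previous lemma combined with the elementary identity $S_n = \prod_{i=1}^n \lambda_i$, where $\lambda_i = 1/\kappa_i$ are the radii of curvature.

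\medskip

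\textbf{Lower bound on $\kappa_i$.} Lemma \ref{lem: lower P} provides an upper bound of the form $C + C' t^{-(n-1)}$ on the largest eigenvalue of $[\mathfrak{r}_{ij}]$ with respect to $\bar{g}$. Since this eigenvalue is exactly $\max_i \lambda_i$, every $\lambda_i$ is bounded above by the same quantity, and hence
\[
\kappa_i = \frac{1}{\lambda_i} \geq \frac{1}{C + C' t^{-(n-1)}},
\]
which, after renaming constants $C \to C_3$ and $C' \to C_4$, yields the left inequality of the statement.

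\medskip

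\textbf{Upper bound on $\kappa_i$.} To bound $\kappa_i$ from above, I bound each $\lambda_i$ from below. Fix $i$ and write
\[
\lambda_i \;=\; \frac{S_n}{\prod_{j \neq i}\lambda_j} \;\geq\; \frac{C_1}{\bigl(\max_j \lambda_j\bigr)^{n-1}},
\]
where I used the hypothesis $S_n \geq C_1$ in the numerator and the bound on $\max_j \lambda_j$ from Lemma \ref{lem: lower P} in the denominator. Substituting the bound $\max_j \lambda_j \leq C_3 + C_4 t^{-(n-1)}$ and inverting gives
\[
\kappa_i = \frac{1}{\lambda_i} \;\leq\; \frac{\bigl(C_3 + C_4 t^{-(n-1)}\bigr)^{n-1}}{C_1},
\]
which, after absorbing $C_1$ into the constants and renaming to $C_3', C_4'$, yields the right inequality. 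All constants depend only on $n, p, R_{-}, R_{+}, C_1, C_2$, as required.

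\medskip

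\textbf{Main obstacle.} There is essentially no new PDE work needed here: the heavy lifting (the evolution of $\mathfrak{r}_{ij}$ under the $p$-flow and the maximum principle argument controlling its largest eigenvalue) has already been performed in Lemma \ref{lem: lower P}. The only subtlety is in the bookkeeping: one must verify that the ``largest eigenvalue'' bound delivered by Lemma \ref{lem: lower P} indeed applies uniformly to each $\lambda_i$ (so that it can be used in the product expression for $S_n$), which follows from the standard variational characterization of eigenvalues of a symmetric matrix. With that in hand, the bound on $\kappa_i$ from above is a purely algebraic consequence of the two-sided bound on $S_n$ and the one-sided bound on the $\lambda_i$.
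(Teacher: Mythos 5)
Your proposal is correct and follows essentially the same route as the paper: the lower bound on $\kappa_i$ is read off directly from the upper bound on the largest radius of curvature in Lemma \ref{lem: lower P}, and the upper bound on $\kappa_i$ follows algebraically by combining $S_n\geq C_1$ (equivalently $\mathcal{K}\leq C_1^{-1}$) with the lower bound on the remaining $n-1$ principal curvatures. The only cosmetic difference is that you phrase the product argument in terms of the $\lambda_i$ while the paper writes it in terms of the $\kappa_i$; the constant bookkeeping (absorbing $C_1$) is handled correctly in both.
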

\begin{proof} The lower bound on the principal curvatures has been, in fact, established in Lemma \ref{lem: lower P}. Consequently, we also obtain now the upper bound as the product of the principal curvatures is bounded from above. Suppose that $\kappa_1 \geq \kappa_2 \geq \ldots \geq \kappa_n$, then $$C_1^{-1} \geq \mathcal{K}= \Pi_{i=1}^n \kappa_i = \kappa_1 \cdot \Pi_{i=1}^{n-1} \kappa_i \geq \kappa_1 (C_3+C_3t^{-(n-1)})^{-(n-1)}.$$
\end{proof}
\begin{theorem}\label{thm: zero volume}
Let $1\leq p<\frac{n+1}{n-1}$ be a real number. Let $x_{K_0}:\mathbb{S}^n\to\mathbb{R}^{n+1}$ be a smooth, strictly convex embedding of $K_0\in\mathcal{K}_{sym}.$ Then there exists a unique solution $x:\mathbb{S}^n\times [0,T)\to\mathbb{R}^{n+1}$ of equation (\ref{e: p flow ev of x}) with initial data $x_{K_0}$, for a maximal finite $T$, such that $\lim\limits_{t\to T}V(K_t)=0.$
\end{theorem}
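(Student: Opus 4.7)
The plan is to apply the Tso-style maximum principle one more time, this time directly to the support function $s$, in order to prove that the outer radius $r_+(K_t)=\max_{\mathbb{S}^n}s(\cdot,t)$ contracts to zero in finite time; the volume conclusion then falls out of the lemmas already established. Short-time existence and uniqueness of a smooth, strictly convex solution on a maximal interval $[0,T)$ follow from the strict parabolicity of (\ref{e: p flow ev of s}) established in \cite{S}. Because $\partial_t s<0$ pointwise, the family $\{K_t\}$ is strictly nested decreasing, so $r_+(K_t)\leq r_+(K_0)$ throughout.

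Set $m(t):=\max_{\mathbb{S}^n}s(\cdot,t)$. At a spatial maximum of $s(\cdot,t)$ one has $\bar{\nabla}^2 s\leq 0$, which forces $\mathfrak{r}_{ij}\leq s\,\bar{g}_{ij}$; hence every radius of curvature satisfies $\lambda_i\leq s$, so $S_n\leq s^n$ and $\mathcal{K}\geq s^{-n}$. Plugging into (\ref{e: p flow ev of s}) at such a point gives
$$\partial_t s\;\leq\;-s\bigl(s^{-n}/s^{n+2}\bigr)^{p/(n+1+p)}\;=\;-s^{\gamma}, \qquad \gamma:=\frac{n+1-(2n+1)p}{n+1+p}.$$
For $p\geq 1$ and $n\geq 1$ one checks $\gamma<0$. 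Hamilton's envelope ODE comparison then yields $m'(t)\leq -m(t)^{\gamma}$ in the Dini sense, and integrating produces
$$m(t)^{1-\gamma}\;\leq\;m(0)^{1-\gamma}-(1-\gamma)\,t,$$
so $m(t)$ vanishes at or before $T^{*}:=m(0)^{1-\gamma}/(1-\gamma)<\infty$. Smooth strict convexity requires $m(t)>0$, hence $T\leq T^{*}<\infty$.

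For the volume limit, $K_t\subset B_{m(t)}$ gives $V(K_t)\leq V(\mathbb{B}^{n+1})\,m(t)^{n+1}$. If $T=T^{*}$, then $m(t)\to 0$ already forces $V(K_t)\to 0$. If $T<T^{*}$, then $m(t)$ stays bounded away from $0$ at $T$, so the only remaining obstruction to extending the flow is the collapse of the inradius: Lemmas \ref{lem: upper G}--\ref{lem: lower and upper P}, combined with standard parabolic regularity, furnish uniform $C^{\infty}$ bounds as soon as both $r_{\pm}(K_t)$ are controlled. Consequently $r_-(K_t)=\min_{\mathbb{S}^n}s(\cdot,t)\to 0$, and for a centrally symmetric body centered at the origin the slab estimate $V(K_t)\leq c_n\,r_-(K_t)\,r_+(K_t)^n$ (obtained by intersecting $K_t$ with the slab $\{|\langle x,z^{*}\rangle|\leq r_-(K_t)\}$ normal to a minimizing direction $z^{*}$) again gives $V(K_t)\to 0$.

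The main obstacle is isolating a scalar whose maximum principle evolution drives it to zero in finite time: the support function itself works because its concavity at a spatial maximum automatically delivers the lower bound on $\mathcal{K}$ needed to close the ODE \emph{without any isoperimetric input}, and the hypothesis $p\geq 1$ is precisely what pushes $\gamma$ into the regime $\gamma<0$ that produces finite-time extinction.
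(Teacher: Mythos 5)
Your proposal is correct and follows essentially the same two-step strategy as the paper: finiteness of $T$ via a shrinking-sphere comparison (your maximum-principle computation on $s$ at its spatial maximum is just the PDE form of the paper's enclosing-ball barrier, and your exponent $\gamma$ matches the paper's $\rho_t=-\rho^{(n+1-(2n+1)p)/(n+p+1)}$), followed by the curvature-bound lemmas plus Krylov/Schauder regularity to extend the solution unless the body degenerates, exactly as in the paper's contradiction argument. The only blemish is your case split: $T<T^{*}$ does not imply $m(t)$ stays bounded away from $0$ (the ODE comparison gives only an upper bound on $m$), but this is harmless since the correct dichotomy --- either $m(t)\to 0$, in which case $V(K_t)\to 0$ directly, or $m(t)\geq m_0>0$, in which case your extension argument forces $r_-(K_t)\to 0$ and the slab estimate applies --- closes the proof with no further changes.
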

\begin{proof}
Suppose that $S_0$ is a sphere which, at time zero, encloses $K_0$. It is clear that, by applying the $p$-flow to $S_0$, the evolving spheres $S_t$ converge to a point in finite time. By a comparison principle, $ K_t$ remains in the closure of $S_t$, therefore $T$ must be finite. Suppose now that $V(K_t)$ does not tend to zero. Then, we must have $s\geq r$, for some $r>0$ on $[0,T)$. By Lemmas \ref{lem: upper G}, \ref{lem: lower G}, \ref{lem: lower P} and \ref{lem: lower and upper P} the principal curvatures of the solution remains uniformly bounded on $[0,T)$ from below and above. Consequently the evolution equation (\ref{e: p flow ev of x}) is uniformly parabolic on $[0,T)$, and bounds on higher derivatives of the support function follows by \cite{K} and Schauder theory. Hence, we can extend the solution after time $T$,  contradicting its definition.
\end{proof}

\section{Convergence of the volume preserving $p$-flow}
In this section, we will conclude the proof of the main theorem.

We will start with a few facts on convex bodies.
For a given convex body $K$, we call the
volume of $K$ its Lebesgue measure as a subset of $\mathbb{R}^{n+1}$.
A celebrated affine invariant quantity associated with $K$ is its affine surface area which for a body with a smooth boundary can be expressed by
\begin{equation}
 \Omega (K) = \int_{\mathbb{S}^n} {\cu}^{-\frac{n+1}{n+2}}\, d\mu_{ \mathbb{S}^n}.
\end{equation} The affine surface area satisfies an isoperimetric inequality which we will state soon in a more general setting.
The extension of the affine surface area defined by Lutwak
\cite{Lutwak2} for $p>1$ in the context of the Firey-Brunn-Minkowski theory of
convex bodies, called the $L_p$-affine or $p$-affine surface area, is
\begin{equation}
\Omega_p (K) =\int\limits_{\mathbb{S}^{n}}\frac{s}{\mathcal{K}}\left(\frac{\mathcal{K}}{s^{n+2}}\right)^{\frac{p}{n+1+p}}
  d\mu_{\mathbb{S}^{n}},
\label{eq:p_affine_surface}
\end{equation}
which reduces to the usual affine surface area for $p=1$.  At the core of the
centro-affine geometry, lies the $p$-affine isoperimetric inequality due to Lutwak, $p>1$
 which generalizes the classical $p=1$ case,
 \begin{equation}
\frac{\Omega_p^{n+p+1}(K)}{ V^{n-p+1} (K)}\leq (n+1)^{n+p+1}\omega_{n+1}^{2p},
 \end{equation}
\noindent where $\omega_{n+1}=V(B^{n+1})$ is the volume of the unit ball in $\mathbb{R}^{n+1},$
\cite{Lutwak2}. The equality holds if and only if $K$ is a centered ellipsoid.

The following result follows directly from the inequality of Proposition 4.2 in \cite{S}.
\begin{thmA}[Monotonicity of $p$-affine isoperimetric ratio]\label{thm: monoton}
Let $\{K_t\}_{[0,T)}$ be a smooth, strictly convex solution of equation (\ref{e: p flow ev of s}). Then the $p$-affine isoperimetric ratio, $\displaystyle\frac{\Omega_{p}^{n+1+p}(K_t)}{V^{n+1-p}(K_t)}$, is non-decreasing along the $p$-flow. The monotonicity is strict unless $K_t$ is an ellipsoid centered at the origin.
\end{thmA}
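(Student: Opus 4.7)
The plan is to take the logarithmic time derivative of the ratio and verify it is non-negative by combining explicit computations of $V'(t)$ and $\Omega_p'(t)$ along the flow with Proposition 4.2 of \cite{S}. Since
$$\frac{d}{dt}\log\left(\frac{\Omega_p^{n+1+p}(K_t)}{V^{n+1-p}(K_t)}\right) = (n+1+p)\,\frac{\Omega_p'(t)}{\Omega_p(K_t)} - (n+1-p)\,\frac{V'(t)}{V(K_t)},$$
the monotonicity reduces to a single inequality comparing these two logarithmic derivatives.

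For the volume, I would start from the representation $V(K_t) = \frac{1}{n+1}\int_{\mathbb{S}^n} s\, S_n\, d\mu_{\mathbb{S}^n}$ and use the standard Aleksandrov-type identity $\int s\, \partial_t S_n\, d\mu = n\int (\partial_t s)\, S_n\, d\mu$ (a consequence of the symmetry and divergence structure of $(\dot S_n)_{ij}$ on the sphere) to reduce $dV/dt$ to $\int (\partial_t s)\, S_n\, d\mu$. Substituting the flow equation (\ref{e: p flow ev of s}) directly identifies this with $-\Omega_p(K_t)$.

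For $\Omega_p$, setting $\beta := p/(n+1+p)$, I would differentiate $\Omega_p = \int_{\mathbb{S}^n} s^{1-(n+2)\beta}\, S_n^{1-\beta}\, d\mu$ under the integral sign and apply the same Aleksandrov-type integration by parts to transfer the $\partial_t S_n$ contribution onto $\partial_t s$, so that $\Omega_p'(t)$ becomes an explicit integral functional of $s$, $\mathcal{K}$, and $\bar{\nabla} s$ alone. Proposition 4.2 of \cite{S}, specialized to the flow speed $-s(\mathcal{K}/s^{n+2})^{p/(n+1+p)}$, then supplies the sharp inequality
$$(n+1+p)\, V(K_t)\, \Omega_p'(t) \;\geq\; -(n+1-p)\,\Omega_p^2(K_t),$$
with equality if and only if $K_t$ is an ellipsoid centered at the origin. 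Combined with $V'(t) = -\Omega_p(K_t)$, this is exactly the non-negativity of the logarithmic derivative displayed above, and the equality clause transfers verbatim to give the strict monotonicity statement unless $K_t$ is a centered ellipsoid.

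The only real obstacle is bookkeeping: tracking the constants and the precise powers of $s$ and $\mathcal{K}$ in the computation of $\Omega_p'(t)$, executing the integration by parts on $\mathbb{S}^n$ cleanly, and matching the resulting integrand to the exact form in which Proposition 4.2 of \cite{S} is stated. Once that identification is made, both the monotonicity of the $p$-affine isoperimetric ratio and its equality characterization follow immediately from the cited proposition.
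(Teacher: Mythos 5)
Your proposal is correct and matches the paper's approach: the paper itself gives no computation, stating only that the theorem ``follows directly from the inequality of Proposition 4.2 in \cite{S}'', and your reduction via $V'(t)=-\Omega_p(K_t)$ and the logarithmic derivative is precisely the natural unwinding of that citation. The decisive inequality is delegated to the same Proposition 4.2 in both cases, so there is nothing substantively different to compare.
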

Finally, we recall  also the following famous theorem by Fritz John.
\begin{thmA}[John's Lemma]\cite{J}
 Let $K$ be a convex body in $\mathbb{R}^{n+1}.$ Then there exist absolute constants $c$ and $C$, and an affine linear transformation $L$ such that
$$c\leq \left(\frac{V(B^{n+1})}{V(K)}\right)^{\frac{1}{n+1}}s_{LK}\leq C,$$ where $B^{n+1}$ denotes the unit ball in  $\mathbb{R}^{n+1}$.
\end{thmA}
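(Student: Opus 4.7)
The plan is to produce the desired $L \in SL(n+1)$ by first using the John ellipsoid of $K$ to roundify it, and then rescaling by an appropriate scalar so the map has unit determinant. I will focus on the centrally symmetric case, which is the only case the paper actually uses; the general case only requires an additional translation placing the John ellipsoid at the origin.

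First, I would establish existence of the John ellipsoid $E \subseteq K$, i.e.\ the inscribed ellipsoid of maximum volume. Parametrize ellipsoids by a positive definite matrix (and, in the non-symmetric case, a center); the set of those contained in $K$ is compact, volume is continuous, hence a maximizer exists. By the central symmetry of $K$, $E$ is centered at the origin, so there is a \emph{linear} map $T$ sending $E$ to the unit ball $\mathbb{B}^{n+1}$. Replacing $K$ by $T(K)$, we may assume the John ellipsoid of $K$ itself is $\mathbb{B}^{n+1}$.

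The classical input I would quote (or prove) is the John containment theorem: if $\mathbb{B}^{n+1}$ is the John ellipsoid of a centrally symmetric convex body $K$, then $K \subseteq \sqrt{n+1}\,\mathbb{B}^{n+1}$. The proof is a Lagrange-multiplier analysis at the volume-maximizing ellipsoid: maximality produces contact points $u_1,\dots,u_m \in \partial K \cap \mathbb{S}^n$ and positive weights $\lambda_i$ with $\sum_i \lambda_i\, u_i \otimes u_i = I_{n+1}$. Since each $u_i \in \partial K$, the supporting hyperplane $\langle \cdot, u_i\rangle = 1$ yields $\langle x, u_i\rangle \le 1$ for every $x \in K$, hence $|x|^2 = \sum_i \lambda_i \langle x, u_i\rangle^2 \le \sum_i \lambda_i = \mathrm{tr}(I_{n+1}) = n+1$.

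Finally, I would renormalize to $SL(n+1)$. Denote by $T_0$ the composed linear map producing $\mathbb{B}^{n+1} \subseteq T_0(K) \subseteq \sqrt{n+1}\,\mathbb{B}^{n+1}$, set $\delta := |\det T_0|^{1/(n+1)}$, and define $L := T_0/\delta \in SL(n+1)$. Then $L(K)$ is sandwiched between balls of radii $1/\delta$ and $\sqrt{n+1}/\delta$, while $V(L(K)) = V(K)$. Comparing $V(K)$ with the volumes of these two balls pins $\delta$ to within a factor $\sqrt{n+1}$ of $(\omega_{n+1}/V(K))^{1/(n+1)}$; substituting back yields $1/\sqrt{n+1} \le (\omega_{n+1}/V(K))^{1/(n+1)}\, s_{L(K)}(z) \le \sqrt{n+1}$, giving the stated bound with $c = 1/\sqrt{n+1}$ and $C = \sqrt{n+1}$. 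The main obstacle — and the historical heart of the statement — is the John containment step; the rest is existence, renormalization, and volume bookkeeping.
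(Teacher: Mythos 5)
The paper does not prove this statement at all: it is quoted verbatim as a classical theorem with a citation to John's 1948 paper, and the proof of the Main Theorem only uses it as a black box. Your outline is the standard, correct derivation of the quoted form from John's ellipsoid theorem, and the volume bookkeeping at the end checks out: from $\frac{1}{\delta}\mathbb{B}^{n+1}\subseteq L(K)\subseteq\frac{\sqrt{n+1}}{\delta}\mathbb{B}^{n+1}$ and $V(L(K))=V(K)$ one indeed gets $\left(\omega_{n+1}/V(K)\right)^{1/(n+1)}\leq\delta\leq\sqrt{n+1}\left(\omega_{n+1}/V(K)\right)^{1/(n+1)}$ and hence the stated sandwich with $c=1/\sqrt{n+1}$, $C=\sqrt{n+1}$ (constants depending on $n$ only, which is all the paper needs). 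Three small points to tighten. First, in the containment step you need $\langle x,u_i\rangle^2\leq 1$, i.e.\ the two-sided bound $|\langle x,u_i\rangle|\leq 1$; the one-sided supporting-hyperplane inequality $\langle x,u_i\rangle\leq 1$ suffices only because $-x\in K$ as well, so say explicitly that central symmetry is used here (and it is also what gives the factor $\sqrt{n+1}$ rather than $n+1$). Second, the claim that the John ellipsoid of a symmetric body is origin-centered deserves a word: either invoke uniqueness of the maximal ellipsoid, or replace a maximizer $E$ by $\frac{1}{2}(E+(-E))$, which is a centered inscribed ellipsoid of the same volume. Third, $T_0/\delta$ has determinant $\pm 1$; compose with a reflection if necessary to land in $SL(n+1)$. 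The genuinely hard ingredient, the contact-point decomposition $\sum_i\lambda_i\,u_i\otimes u_i=I_{n+1}$, is left as a quotation in your write-up, but that is exactly the level of rigor the paper itself adopts for this lemma, so the proposal is acceptable as it stands.
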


Let now $x:\mathbb{S}^n\times[0,T]\to\mathbb{R}^{n+1}$ be a solution of equation (\ref{e: p flow ev of x}). Then for each $\lambda>0$, note that $x_{\lambda}$  defined by $x_{\lambda}:\mathbb{S}^n\times\left[0,\lambda^{\frac{(2n+2)p}{n+1+p}}T\right]\to\mathbb{R}^{n+1}$  with
$$x_{\lambda}(\theta,t)=\lambda x\left(\theta,\lambda^{-\frac{(2n+2)p}{n+1+p}}t\right)$$
is also a solution of evolution equation (\ref{e: p flow ev of x}).

\noindent \textbf{Proof of the main theorem}: We follow the procedure in \cite{BA1}. Fix $t\in[0,T)$. Define $\tilde{s}$ a solution of (\ref{e: p flow ev of s}), by the rescaling property, as follows
$$\tilde{s}(z,\tau)=\left(\frac{V(B^{n+1})}{V(K_t)}\right)^{\frac{1}{n+1}}s\left(z,
t+\left(\frac{V(B^{n+1})}{V(K_t)}\right)^{-\frac{2p}{n+1+p}}\tau\right),$$
where $\tilde s(\cdot, 0)$ is the support function of $L_t\tilde{K}_t$
and $L_t\in SL(n+1)$ is obtained from John's Lemma for the convex body $K_t$. Therefore,
$$c\leq\tilde{s}(z,0)\leq C.$$
Let $B_r$ denote the ball of radius $r$ centered at the origin. Thus, $B_c$ is contained in the convex body associated with the support function $\tilde{s}(\cdot,0).$
The containment principle, see for example Proposition 2.2 in \cite{S}, insures that $B_{c/2}$ will be contained in the convex body associated with the support function $\tilde{s}(\cdot,\tau),$ for $\tau\in[0,\delta],$
where $\delta$ is the time that $B_c$ becomes $B_{c/2}$ under the $p$-flow. This time can be found explicitly as the evolution of a ball of radius $\rho$ centered at the origin is $\rho_t=-\rho^{(n+1-(2n+1)p)/(n+p+1)}$. Now Lemmas \ref{lem: upper G}, \ref{lem: lower G}, \ref{lem: lower P} and \ref{lem: lower and upper P} imply that there are uniform lower and upper bounds on the principal curvatures and on the speed of the flow on the time interval $[\delta/2,\delta].$ Therefore, by \cite{K}, we conclude that there are uniform bounds on higher derivatives of the curvature. Consequently, all quantities related to the original solution that are both scaling invariant and invariant under $SL(n+1)$ satisfy uniform bounds on the time interval $\left[t+\frac{C}{2}V(K_t)^{\frac{2p}{n+1+p}},t+CV(K_t)^{\frac{2p}{n+1+p}}\right].$ Since $t$ is arbitrary and $C$ is an absolute constant, we have uniform bounds on the time interval $[T/2,T).$ It means that all $SL(n+1)$ invariant quantities of the normalized solution to the $p$-flow are uniformly bounded on the time interval $[T/2,T).$ We point out here that if $n=1$, only Lemma \ref{lem: upper G} and Lemma \ref{lem: lower G} are needed to derive such uniform bounds on the time interval $[T/2,T)$.

Consequently, there is a sequence of times $\{t_k\}_{k\in\mathbb{N}}$ and $L_{t_k}\in SL(n+1)$, such that $t_k$ approaches $T$ and $L_{t_k}\tilde{K}_{t_k}$ converges in the $C^{\infty}$ topology to a convex body $\tilde{K}_{T}.$
Now monotonicity of the $p$-affine isoperimetric ratio and Theorem \ref{thm: zero volume} with a similar argument as in \cite{BA1}, implies that $\tilde{K}_T$ must be an ellipsoid. Therefore,
$$\lim_{t_k\to T}\frac{\Omega_{p}^{n+1+p}(K_{t_k})}{V^{n+1-p}(K_{t_k})}=(n+1)^{n+1+p}\omega_{n+1}^{2p},$$
and again by monotonicity of the $p$-affine isoperimetric ratio
$$\lim_{t\to T}\frac{\Omega_{p}^{n+1+p}(K_t)}{V^{n+1-p}(K_t)}=(n+1)^{n+1+p}\omega_{n+1}^{2p}.$$
From the equality case in the $p$-affine isoperimetric inequality \cite{Lutwak2}, it follows that, modulo $SL(n+1)$,
$$\lim_{t\to T}\left(\frac{V(B^{n+1})}{V(K_t)}\right)^{\frac{1}{n+1}}K_t=B^{n+1}$$
 sequentially in the $C^{\infty}$ topology. The proof is complete.\newline

\textbf{Acknowledgment.}\\
 We would like to thank the referees for their helpful comments and the thorough attention given to our manuscript.
\bibliographystyle{amsplain}

\end{document}